\DeclareMathAlphabet{\mathpzc}{OT1}{pzc}{m}{it}
\newcommand{\dd}{\mathrm{d}}
\newcommand{\oo}{\mathrm{o}}
\newcommand{\ii}{\mathrm{i}}
\newcommand{\ff}{\mathrm{f}}
\newtheorem{remark}{Remark}
\title{Auxiliary functions in the study of Stefan-like problems with variable thermal properties 
}
\author{Andrea N. Ceretani \thanks{IMAS-UBA-CONICET, Intendente Guiraldes 2160, Capital Federal 1428, Argentina, \texttt{aceretani@dm.uba.ar}, and Escuela de Ciencia y Tecnolog\'ia, Universidad Nacional de San Mart\'in, Mart\'in de Irigoyen 3100, San Mart\'in 1650, Argentina, \texttt{aceretani@unsam.edu.ar}}
\and
Natalia N. Salva  \thanks{CNEA-CONICET, Departamento de Mec\'anica Computacional, Centro At\'omico Bariloche, Av. Bustillo 9500, Bariloche 8400, Argentina, and Departamento de Matem\'atica, Centro Regional Bariloche, Universidad Nacional del Comahue, Quintral 1250, Bariloche 8400, Argentina, \texttt{{natalia@cab.cnea.gov.ar}}}
\and
Domingo A. Tarzia \thanks{CONICET-Departamento de Matem\'atica, Facultad de Ciencias Empresariales, Universidad Austral, Paraguay 1950, Rosario 2000, Argentina, \texttt{dtarzia@austral.edu.ar}}}
\begin{document}  

\maketitle

\begin{abstract}
We address the existence and uniqueness of the so-called {\em modified error function} that arises in the study of phase-change problems with specific heat and thermal conductivity given by linear functions of the material temperature. This function is defined from a differential problem that depends on two parameters which are closely related with the slopes of the specific heat and the thermal conductivity. We identify conditions on these parameters which allow us to prove the existence of the modified error function. In addition, we show its uniqueness in the space of non-negative bounded analytic functions for parameters that can be negative and different from each other. This extends known results from the literature and enlarges the class of associated phase-change problems for which exact similarity solutions can be obtained. In addition, we provide some properties of the modified error function considered here.
\end{abstract}

\begin{keywords}
Modified error function, Stefan problems, temperature-dependent thermal coefficients, nonlinear boundary value problems
\end{keywords}

\begin{AMS}
34B15, 
34B08, 
80A22, 
35R35 
\end{AMS}

\section{Introduction}
This article is devoted to show the existence and uniqueness of an auxiliary function that arise in the study of phase-change processes when some thermal coefficients are assumed to vary with the material temperature. The function of interest is the solution to the following nonlinear differential system:
\begin{equation}\label{P}
((1+\delta y)y')'+2x(1+\gamma y)y'=0\quad x\in(0,+\infty),\quad y(0)=0,\quad y(+\infty)=1,
\end{equation}
where $\delta,\gamma\in(-1,+\infty)$, and $y(+\infty):=\lim_{x\to+\infty}y(x)$.

Problem \eqref{P} was introduced by Oliver and Sunderland in \cite{OlSu1987} for the study of two-phase Stefan problems on the semi-infinite line $(0,+\infty)$, with heat capacity and thermal conductivity given by
\begin{equation}\label{kc}
c(\vartheta):=c_\oo\left(1+\alpha\frac{\vartheta-\vartheta_\oo}{\vartheta_\ii-\vartheta_\oo}\right)\qquad\text{and}\qquad
k(\vartheta):=k_\oo\left(1+\beta\frac{\vartheta-\vartheta_\oo}{\vartheta_\ii-\vartheta_\oo}\right),
\end{equation}
respectively, where $\vartheta$ is the material temperature, $c_\oo>0$ and $k_\oo>0$ are reference values for the specific heat and the thermal conductivity, $\vartheta_\ii$ is a uniform initial temperature distribution, $\vartheta_\oo$ is a prescribed constant temperature at the boundary $x=0$, and $\alpha,\beta\in\mathbb{R}$ (the values of $\alpha$, $\beta$, $c_\oo$, and $k_\oo$ generally differ from liquid to solid phases).
In particular, they proposed a method to find similarity solutions that relies on the assumption that problem \eqref{P} has a solution $\Phi_{\delta\gamma}$ for any $\delta,\gamma\in(-1,+\infty)$. The temperature in each phase is then obtained in terms of an auxiliary function $\Phi_{\delta\gamma}$ for some parameters $\delta$ and $\gamma$ that must be calculated and may be different from one phase to the other, see equations (23)-(27) in \cite{OlSu1987}. The function $\Phi_{\delta\gamma}$ was called {\em modified error function} and it was obtained numerically by 
solving problem \eqref{P}. 

The signs of $\delta$ and $\gamma$ are closely related on how the thermal conductivity and the heat capacity vary with the material temperature. We illustrate this in the following example: Consider a one-phase solidification process for a material with phase-change temperature $\vartheta_\ff$. Then, $\vartheta_\oo<\vartheta_\ff$ and $\vartheta_\ii\equiv\vartheta_\ff$. According with \cite{OlSu1987}, it must be $\gamma\Phi_{\delta\gamma}(\lambda)=\alpha$ and $\delta\Phi_{\delta\gamma}(\lambda)=\beta$,
where $\lambda\equiv\frac{s(t)}{2\sqrt{at}}$ for all $t>0$, $x=s(t)$ is the location of the free boundary at time $t$, and $a$ is the coefficient of diffusion of the solid phase. Then, provided that $\alpha\neq 0$ and $\beta\neq 0$ we have $\mathrm{sign}(\gamma)=\mathrm{sign}(\alpha)$ and $\mathrm{sign}(\delta)=\mathrm{sign}(\beta)$ since the modified error function is non-negative everywhere (see Theorem \ref{theorem} below). We now observe that the slopes of $c$ and $k$ are given by $\frac{c_\oo\alpha}{\vartheta_\ii-\vartheta_\oo}$ and $\frac{k_\oo\beta}{\vartheta_\ii-\vartheta_\oo}$, respectively, see \eqref{kc}, where $\vartheta_\ii-\vartheta_\oo>0$. Therefore, the heat capacity is increasing for $\gamma>0$, decreasing for $\gamma<0$, and the analogous conclusion holds for the relation between the thermal conductivity and the parameter $\delta$.

The method described in \cite{OlSu1987} had already been considered by Cho and Sunderland in \cite{ChSu1974} for the analogous Stefan problem with constant heat capacity, corresponding to $\gamma=0$ in \eqref{P}. The modified error function for the case when $\gamma=0$ and $\delta>0$ was studied by the authors in \cite{CeSaTa2017,CeSaTa2018} (see also \cite{Bo2018}), where existence and uniqueness in the space of bounded analytic functions were proven and explicit approximations were provided (see \cite{MaSiPa2019} for improved approximations). In particular, this paper extends the existence and uniqueness result in \cite{CeSaTa2017} for the case $\delta<0$ ($\gamma=0$). 

Similar approaches to those introduced by Sunderland and collaborators were followed to find exact similarity solutions in the cases when non-Dirichlet boundary conditions are prescribed at $x=0$ or when the physical domain is allowed to move, see e.g. \cite{CeSaTa2018-b,KuKuRa2018-a,KuKuRa2018-b}. In all cases it was assumed that $\alpha=\beta>0$, or $\alpha=0$ and $\beta>0$. Analogous methods were also used to determine solutions to problems with more general thermal coefficients, see e.g. \cite{BoNaSeTa2020,KuKuRa2018-c}. Other approaches to find similarity solutions to Stefan-like problems with non-constant thermal properties and arbitrary initial and boundary conditions were recently considered in, e.g., \cite{BrNa2019,Fo2018,ZhouWaBu2014,BoTa2018,Tao1978}. We refer to \cite{OlSu1987} for a discussion about the effects of including variations with respect to the material temperature of thermal conductivity and heat capacity in phase-change models.

In the next section we provide the existence and uniqueness of a solution $\Phi_{\delta\gamma}$ to problem \eqref{P} in the space of bounded analytic functions, for some $\delta,\gamma\in(-1,+\infty)$. In particular, $\delta$ and $\gamma$ are allowed to be negative and different from each other. In this manner we extend already known results for modified error functions available in the literature. 

\section{Existence and uniqueness of $\Phi_{\delta\gamma}$}
The following is the main result of the paper. 
\begin{theorem}\label{theorem}
Assume that  
\begin{equation}\label{Condition-delta-gamma}
M(\delta,\gamma):= \frac{\max(1,1+\delta)^{3/2}\max(1,1+\gamma)^{1/2}}{\min(1,1+\delta)^{5/2}\min(1,1+\gamma)^{1/2}} 
\left(\hspace*{-0.05cm}2|\delta|+\frac{|\delta-\gamma|\max(1,1+\delta)}{\min(1,1+\delta)\min(1,1+\gamma)}\hspace*{-0.05cm}\right)<1.
\end{equation}
Then problem \eqref{P} admits a unique bounded analytic solution $\Phi_{\delta\gamma}$ that satisfies 
\begin{equation*}
0\leq\Phi_{\delta\gamma}(x)\leq 1\qquad\text{for all}\quad x\geq 0.
\end{equation*}
\end{theorem}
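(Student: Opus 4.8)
The plan is to recast the second-order nonlinear boundary value problem \eqref{P} as a fixed-point equation and apply the Banach contraction principle, with the smallness condition \eqref{Condition-delta-gamma} ensuring that the relevant operator is a contraction. First I would integrate the equation once to exploit its structure. Writing the equation as $((1+\delta y)y')' = -2x(1+\gamma y)y'$ and integrating from $x$ to $+\infty$, using $y(+\infty)=1$ and the expectation that $y'(+\infty)=0$, I obtain a first-order relation expressing $(1+\delta y)y'$ as an integral involving $y$ and $y'$. A cleaner route is to introduce the substitution suggested by the $\gamma=0$ case: set $v = (1+\delta y)y' = \left(y + \tfrac{\delta}{2}y^2\right)'$, so that the leading term becomes exact, and then integrate using an integrating-factor form of the equation. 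The goal is to produce an integral equation $y = \mathcal{T}[y]$ on $[0,+\infty)$ whose fixed point solves \eqref{P} together with the boundary conditions $y(0)=0$ and $y(+\infty)=1$.

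Next I would set up the functional-analytic framework. I would work in a closed convex subset of a Banach space, the natural candidate being functions $y$ that are bounded, satisfy $0\le y\le 1$, and whose behavior at infinity matches the error-function profile; a weighted sup-norm on $y$ together with its derivative (or on $v=(1+\delta y)y'$) is likely needed so that the Gaussian factor $e^{-x^2}$ arising from the integrating factor $\exp\!\left(\int 2x\,\frac{1+\gamma y}{1+\delta y}\,dx\right)$ is controlled. The boundedness hypotheses $\delta,\gamma\in(-1,+\infty)$ guarantee $1+\delta y>0$ and $1+\gamma y>0$ on the admissible set, so all the quotients $\frac{1}{1+\delta y}$ and $\frac{1+\gamma y}{1+\delta y}$ are bounded above and below; the factors $\max(1,1+\delta)$, $\min(1,1+\delta)$, etc. in $M(\delta,\gamma)$ are precisely the extreme values of these quotients as $y$ ranges over $[0,1]$. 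I would estimate these bounds carefully, since they are the source of every constant in \eqref{Condition-delta-gamma}.

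Then I would prove that $\mathcal{T}$ maps the admissible set into itself and is a contraction. The self-mapping property, including $0\le \mathcal{T}[y]\le 1$, should follow from monotonicity of the integrals and the sign of the Gaussian kernel. For the contraction estimate I would take two admissible functions $y_1,y_2$, form the difference $\mathcal{T}[y_1]-\mathcal{T}[y_2]$, and bound it in the chosen norm; the difference of the nonlinear coefficients produces two kinds of terms, one controlled by $|\delta|$ (from the difference in the $\frac{1}{1+\delta y}$ factor) and one controlled by $|\delta-\gamma|$ (from the difference in the exponent $\frac{1+\gamma y}{1+\delta y}$, since $\frac{1+\gamma y}{1+\delta y}-1 = \frac{(\gamma-\delta)y}{1+\delta y}$). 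Assembling these with the extreme-value bounds reproduces the two summands $2|\delta|$ and $\frac{|\delta-\gamma|\max(1,1+\delta)}{\min(1,1+\delta)\min(1,1+\gamma)}$ inside the parenthesis of $M(\delta,\gamma)$, multiplied by the prefactor coming from the norm equivalences. The condition $M(\delta,\gamma)<1$ is exactly the statement that the Lipschitz constant of $\mathcal{T}$ is below $1$, giving existence and uniqueness of a fixed point.

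The main obstacle I anticipate is twofold. First, choosing the right norm: because the problem lives on an unbounded interval and the natural kernel decays like $e^{-x^2}$, a naive sup-norm will not close the estimates, and I expect to need a weighted norm (or a norm on the pair $(y,v)$) tuned so that the Gaussian integrals $\int_x^\infty e^{s^2}\!\int_s^\infty e^{-\tau^2}d\tau\,ds$-type quantities are uniformly bounded — this is where the delicate constant-tracking happens. Second, upgrading the fixed point from merely bounded/continuous to \emph{analytic}: the contraction argument yields a bounded solution, and analyticity must be recovered separately, presumably by a bootstrap/regularity argument showing the integral representation forces $y$ to be real-analytic (for instance by establishing a convergent power series or by differentiating the integral equation repeatedly and controlling the derivatives). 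Establishing uniqueness specifically within the class of bounded analytic functions, rather than in the full metric space, is the final point that will require care, and I would address it by showing every bounded analytic solution necessarily lies in the admissible set to which the contraction principle applies.
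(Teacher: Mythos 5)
Your plan follows essentially the same route as the paper: the operator you describe --- obtained from the integrating factor for $v=(1+\delta y)y'$, namely $v(x)=v(0)\exp\bigl(-\int_0^x 2z\,\tfrac{1+\gamma y}{1+\delta y}\,\dd z\bigr)$, integrated once more and normalized by the condition at infinity --- is exactly the paper's map $T(h)=F(\,\cdot\,;h)/F(+\infty;h)$, and your accounting of where the $2|\delta|$ and $|\delta-\gamma|$ terms in $M(\delta,\gamma)$ come from matches the paper's contraction estimate. The two obstacles you anticipate do not arise: the plain supremum norm closes the estimates because the Gaussian kernel stays inside the integrals (so quantities like $\int_0^x w^2 e^{-cw^2}\,\dd w$ are bounded uniformly in $x$), and analyticity needs no separate bootstrap since the paper works from the outset in the Banach space of bounded analytic functions, which the explicit integral formula for $T$ manifestly preserves.
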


\begin{proof}
Let $X$ be the Banach space of bounded analytic functions $h:[0,+\infty)\to\mathbb{R}$, equipped with the supremum norm $\|h\|_\infty:=\sup \{|h(x)|:x\geq 0\}$. In addition, let
\begin{equation*}
K:=\{h\in X:\|h\|_\infty\leq 1, h\geq 0, h(0)=0\}.
\end{equation*}  
We notice that $K$ is a non-empty closed subset of $X$.

Let $h\in K$. Consider the following auxiliary linear problem:
\begin{equation}\label{AuxP}
((1+\delta h)y')'+2x(1+\gamma h)y'=0\quad x\in(0,+\infty),\quad y(0)=0,\quad y(+\infty)=1.
\end{equation}
From this, we shall formulate the existence and uniqueness of a solution to \eqref{P} as a fixed point problem. 

Let $F(\,\cdot\,;h):[0,+\infty)\to\mathbb{R}$ be given by
\begin{equation*}\label{F}
F(x;h):=\int_0^x\exp\left(-\int_0^w\frac{2z(1+\gamma h(z))}{1+\delta h(z)}\,\dd z\right)\frac{1}{1+\delta h(w)}\,\dd w.
\end{equation*}
Exploiting the inequalities
\begin{equation}\label{Estimate-h}
0<\min(1,1+\upsilon)\leq 1+\upsilon h\leq \max(1,1+\upsilon),\quad\text{for}\quad \upsilon=\delta\quad\text{or}\quad\upsilon=\gamma,
\end{equation}
and taking into account that $\int_0^{+\infty}\exp(-w^2)\,\dd w=\frac{\sqrt{\pi}}{2}$, we obtain 
\begin{equation*}
\begin{split}
0\leq F(x;h)\leq\,\frac{\sqrt{\pi}\max(1,1+\delta)^{1/2}}{2\min(1,1+\delta)\min(1,1+\gamma)^{1/2}}=:M_1(\delta,\gamma)\qquad\text{for all}\quad x\geq 0.
\end{split}
\end{equation*}
Then $F(\,\cdot\,;h)$ is well-defined. Analogous computations allow us to observe 
\begin{equation}\label{Estimate-FInf}
\frac{1}{F(+\infty;h)}\leq \frac{2\max(1,1+\delta)\max(1,1+\gamma)^{1/2}}{\sqrt{\pi}\min(1,1+\delta)^{1/2}}=:M_2(\delta,\gamma).
\end{equation}
Hence, the map $T:K\to K$ given by
\begin{equation}\label{T}
(T(h))(x):=\frac{F(x;h)}{F(+\infty;h)},
\end{equation}
is well-defined too. Then, we notice that a necessary and sufficient condition to be $y\in K$ a solution to \eqref{P} is that $T(y)=y$ since $y=T(h)$ solves \eqref{AuxP} for each $h\in K$. The rest of the proof consists of proving that $T$ is a contraction from $K$ into itself, provided \eqref{Condition-delta-gamma} holds true. Then, the theorem will follow by Banach's fixed point theorem. 

Initially, we note $T(K)\subset K$ as a direct consequence of the definition of $T$. Let $x\geq 0$ and $h_1,h_2\in K$. We have:
\begin{equation*}
\begin{split}
|(T(h_1))(x)-(T(h_2))(x)|\leq\,&\left|\frac{F(x;h_1)}{F(+\infty;h_1)}-\frac{F(x;h_2)}{F(+\infty;h_1)}\right|+
\left|\frac{F(x;h_2)}{F(+\infty;h_1)}-\frac{F(x;h_2)}{F(+\infty;h_2)}\right|\\[0.25cm]
\leq\,& \frac{|F(x;h_1)-F(x;h_2)|}{F(+\infty;h_1)}+\frac{\left|F(+\infty;h_2)-F(+\infty;h_1)\right|}{F(+\infty;h_1)}.
\end{split}
\end{equation*}
Then, using \eqref{Estimate-FInf} we find
\begin{equation}\label{Estimate-T}
\begin{split}
|(T(h_1))(x)-(T(h_2))(x)|
\leq M_2(\delta,\gamma)(I+J),
\end{split}
\end{equation}
where $I:=|F(x;h_1)-F(x;h_2)|$ and $J:=\,|F(+\infty;h_1)-F(+\infty;h_2)|$.

Defining $f(w):=\exp(-2w)$ and $w_i:=\int_0^w\frac{z(1+\gamma h_i(z))}{1+\delta h_i(z)}\,\dd z$ for $w\geq 0$, $i=1,2$, and using the estimates \eqref{Estimate-h}, we have
\begin{equation}\label{Estimate-I}
\begin{split}
I\leq\,&\left| \int_0^x\frac{f(w_1)}{1+\delta h_1(w)}-\frac{f(w_2)}{1+\delta h_1(w)} \,\dd w \right|+\left| \int_0^x\frac{f(w_2)}{1+\delta h_1(w)}-\frac{f(w_2)}{1+\delta h_2(w)} \,\dd w \right|\\[0.25cm]
\leq\,&\frac{1}{\min(1,1+\delta)}\int_0^x\left|f(w_1)-f(w_2)\right|\,\dd w+ \frac{|\delta|\|h_1-h_2\|_\infty}{\min(1,1+\delta)^2}\int_0^xf(w_2)\,\dd w.
\end{split}
\end{equation}
Let $w\geq 0$. From the Mean Value Theorem we observe that $f(w_1)-f(w_2)=-2f(\omega)(w_1-w_2)$, where $\omega$ is a number in between $w_1$ and $w_2$. Furthermore, since $f$ is decreasing we have that $f(\omega)\leq f(\min(w_1,w_2))=f(w_k)$ for $k=1$ or $k=2$. Then, 
\begin{equation}\label{Estimate-I1}
\begin{split}
\int_0^x|f(w_1)-f(w_2)|\,\dd w\leq\,&\,2\int_0^xf(w_k)|w_1-w_2|\,\dd w\\[0.25cm]
\leq\,&\frac{|\delta-\gamma|\|h_1-h_2\|_\infty}{\min(1,1+\delta)^2}\int_0^xw^2\exp\left(-\frac{\min(1,1+\gamma)}{\max(1,1+\delta)}w^2\right)\,\dd w\\[0.25cm]
\leq\,&\frac{\sqrt{\pi}|\delta-\gamma|\max(1,1+\delta)^{3/2}}{4\min(1,1+\delta)^2\min(1,1+\gamma)^{3/2}}\|h_1-h_2\|_\infty.
\end{split}
\end{equation}
where we have used \eqref{Estimate-h} and that $\int_0^{+\infty}w^2\exp(-w^2)\,\dd w=\frac{\sqrt{\pi}}{4}$.

From analogous arguments, we find
\begin{equation}\label{Estimate-I2}
\begin{split}
\int_0^xf(w_2)\,\dd w\leq\,&\frac{\sqrt{\pi}\max(1,1+\delta)^{1/2}}{2\min(1,1+\gamma)^{1/2}}.
\end{split}
\end{equation}

Hence, using \eqref{Estimate-I1} and \eqref{Estimate-I2} on \eqref{Estimate-I}, we obtain $I\leq M_3(\delta)\|h_1-h_2\|_\infty$, where
\begin{equation*}
\begin{split}
M_3(\delta):=\frac{\sqrt{\pi}\max(1,1+\delta)^{1/2}}{4\min(1,1+\delta)^2\min(1,1+\gamma)^{1/2}} 
\left(2|\delta|+\frac{|\delta-\gamma|\max(1,1+\delta)}{\min(1,1+\delta)\min(1,1+\gamma)}\right).
\end{split}
\end{equation*} 
An identical argument yields $J\leq M_3(\delta,\gamma)\|h_1-h_2\|_\infty$. Hence, it follows from \eqref{Estimate-T} and the above estimates for $I$ and $J$ that 
\begin{equation*}
\begin{split}
\|T(h_1)-T(h_2)\|_\infty\leq 2M_2(\delta,\gamma)M_3(\delta,\gamma)\|h_1-h_2\|_\infty=M(\delta,\gamma)\|h_1-h_2\|_\infty,
\end{split}
\end{equation*} 
Therefore, condition \eqref{Condition-delta-gamma} ensures that $T$ is a contracting map and the proof is finished.
\end{proof}

\begin{remark}
Theorem \ref{theorem} improves the analogous result given by the authors in \cite{CeSaTa2017} for the case when $\delta>0$ and $\gamma=0$. In fact, the result given in \cite{CeSaTa2017} holds true provided that $\delta>0$ satisfies $\frac{\delta}{2}(1+\delta)^{3/2}(3+\delta)(1+(1+\delta)^{3/2})<1$,
whereas condition \eqref{Condition-delta-gamma} in Theorem \ref{theorem} just requires $\delta(1+\delta)^{3/2}(3+\delta)<1$.
\end{remark}

Figure \ref{fig:A} depicts the set $\mathcal{A}$ of all ordered pairs $(\delta,\gamma)$ that satisfy condition \eqref{Condition-delta-gamma} in Theorem \ref{theorem}. Figures \ref{fig:Gamma-} and \ref{fig:Gamma+} show plots of the modified error function $\Phi_{\delta\gamma}$ for several choices of the parameters $\delta$ and $\gamma$, obtained by solving numerically the boundary value problem \eqref{P} through the bvodes routine implemented in Scilab \cite{Ascher}. Note that condition \eqref{Condition-delta-gamma} is sufficient but not necessary, therefore we show the plots of the modified error function for parameters included and not included in the set $\mathcal{A}$.

\begin{figure}[h!]
\centering
\begin{subfigure}{.35\textwidth}
\includegraphics[scale=0.38,clip=true,trim=0 0 0 20]{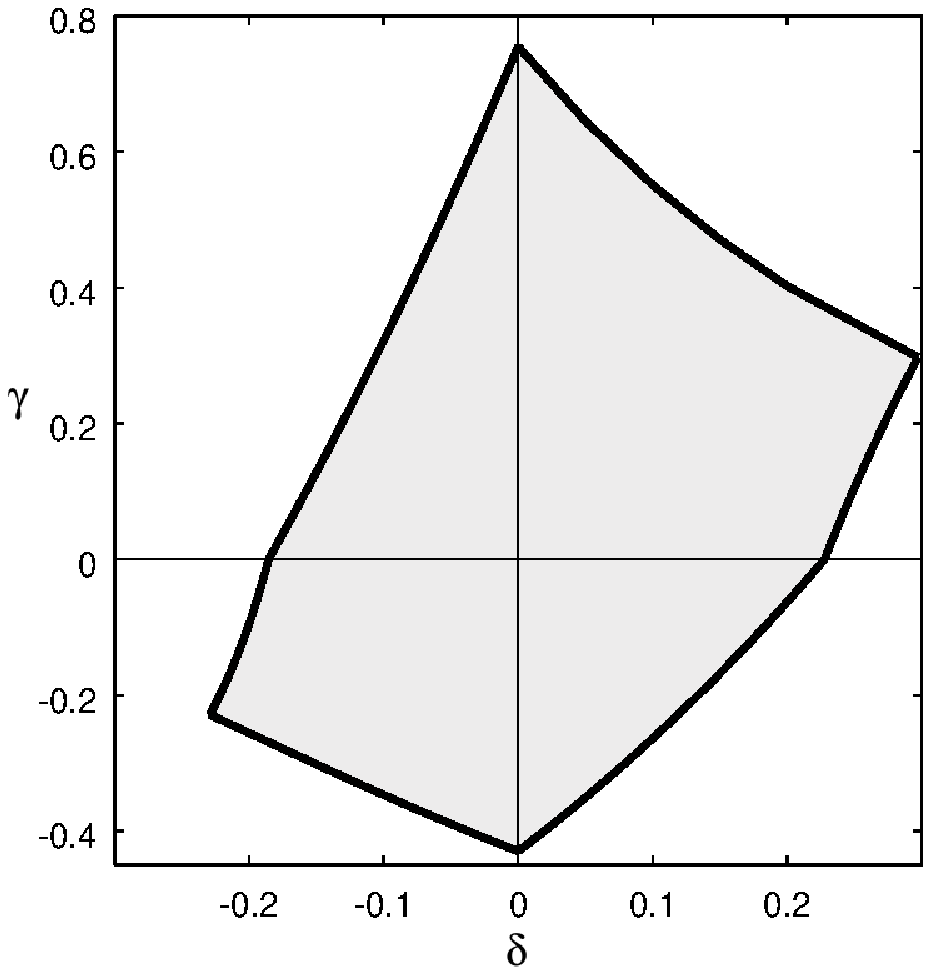}
\caption{$\mathcal{A}=\{(\delta,\gamma) : M(\delta,\gamma)<1\}$}\label{fig:A}
\end{subfigure}
\begin{subfigure}{.3\textwidth}
\includegraphics[scale=0.33,clip=true,trim=0 0 0 20]{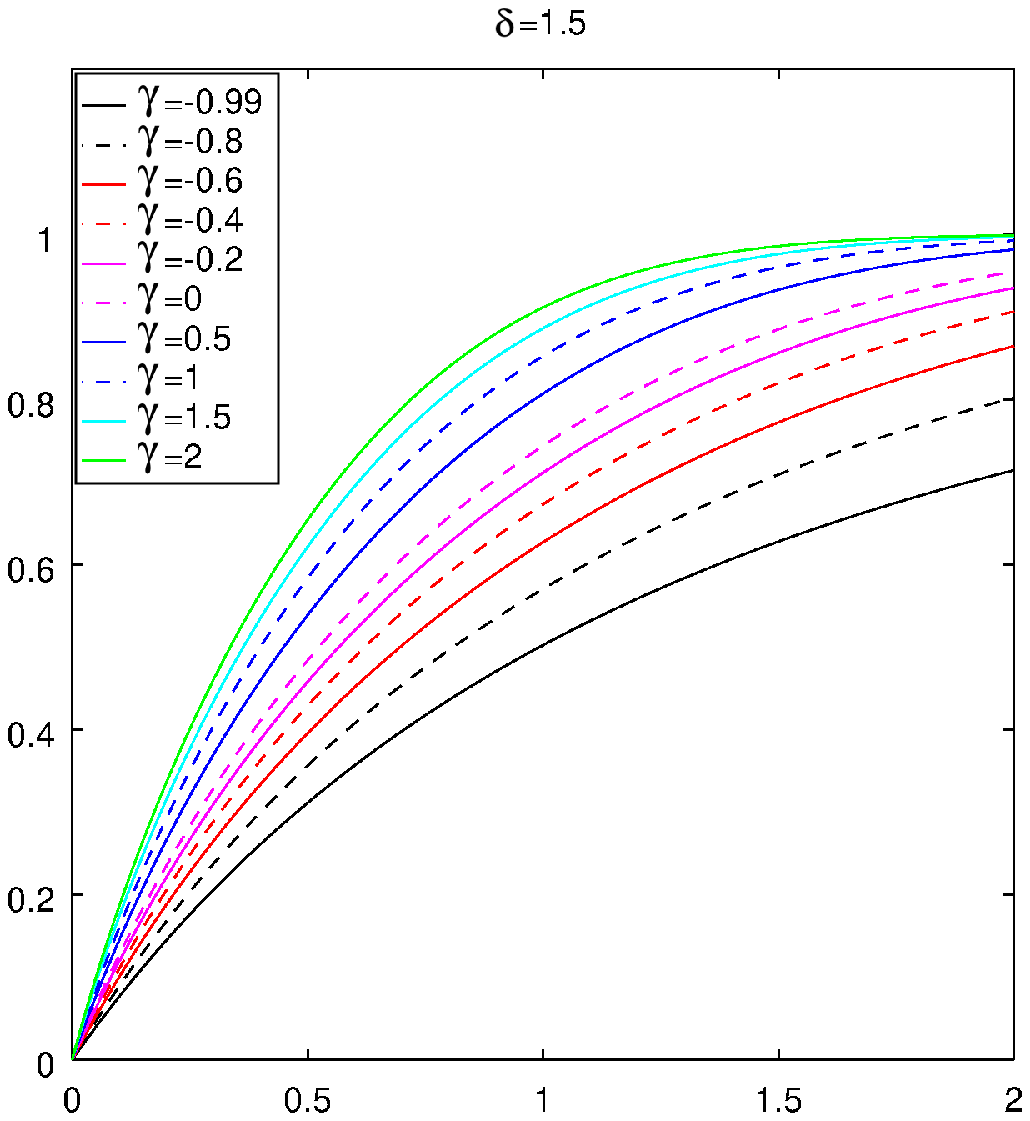}
\caption{$\Phi_{\delta\gamma}$ for $\delta=1.5$}\label{fig:Gamma-}
\end{subfigure}
\begin{subfigure}{.3\textwidth}
\includegraphics[scale=0.33,clip=true,trim=0 0 0 20]{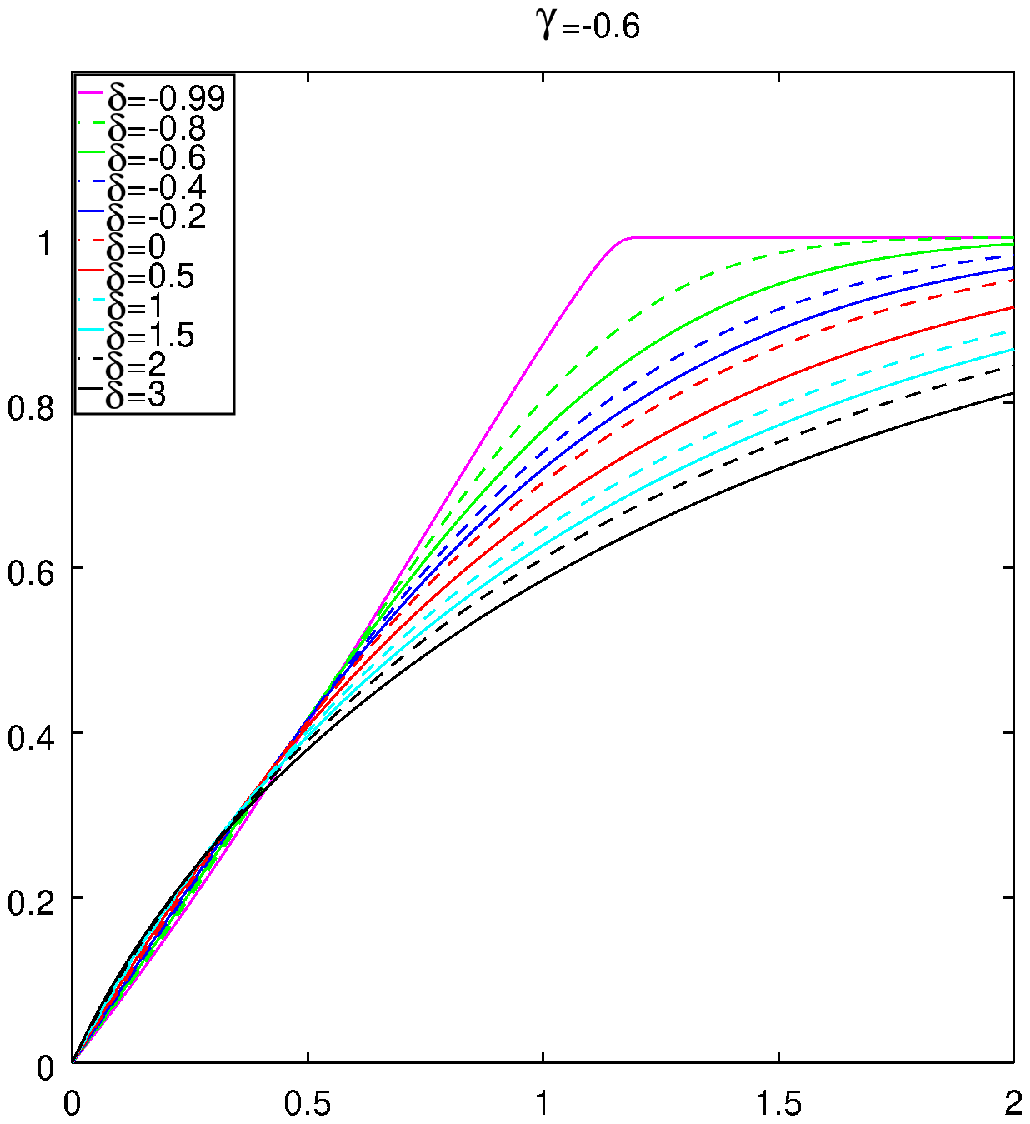}
\caption{$\Phi_{\delta\gamma}$ for $\gamma=-0.6$}\label{fig:Gamma+}
\end{subfigure}

\caption{Pairs $(\delta,\gamma)$ that satisfy condition \eqref{Condition-delta-gamma} in Theorem \ref{theorem} (left) and plots of the modified error function $\Phi_{\delta\gamma}$ for $\delta=1.5$  (middle) and $\gamma=-0.6$ (right).}
\end{figure}

The next corollary closes the article and establishes that the modified error function $\Phi_{\delta\gamma}$ given by Theorem \ref{theorem} shares some essential features with the classical error function. The proof is analogous to the one for Theorem 5.1 of \cite{CeSaTa2018} so that we shall omit it here.

\begin{corollary}\label{corollary}
Suppose that \eqref{Condition-delta-gamma} holds true. Then the unique solution $\Phi_{\delta\gamma}$ to problem \eqref{P} given by Theorem \ref{theorem} is increasing. If in addition $\delta$ is non-negative, then $\Phi_{\delta\gamma}$ is also concave.
\end{corollary}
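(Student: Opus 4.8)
The plan is to exploit the fixed-point characterization obtained in the proof of Theorem \ref{theorem}. Since $\Phi_{\delta\gamma}$ is the unique solution of $T(y)=y$ with $T$ given by \eqref{T}, it admits the explicit representation
\begin{equation*}
\Phi_{\delta\gamma}(x)=\frac{1}{F(+\infty;\Phi_{\delta\gamma})}\int_0^x\exp\left(-\int_0^w\frac{2z(1+\gamma\Phi_{\delta\gamma}(z))}{1+\delta\Phi_{\delta\gamma}(z)}\,\dd z\right)\frac{\dd w}{1+\delta\Phi_{\delta\gamma}(w)}.
\end{equation*}
Because $\Phi_{\delta\gamma}$ is analytic by Theorem \ref{theorem}, the integrand is continuous, so the Fundamental Theorem of Calculus applies and every differentiation performed below is legitimate; this disposes of the only potential technical subtlety.

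For the monotonicity I would differentiate this representation directly. Writing $\Phi:=\Phi_{\delta\gamma}$ for brevity, one obtains
\begin{equation*}
\Phi'(x)=\frac{1}{F(+\infty;\Phi)}\exp\left(-\int_0^x\frac{2z(1+\gamma\Phi(z))}{1+\delta\Phi(z)}\,\dd z\right)\frac{1}{1+\delta\Phi(x)}.
\end{equation*}
Here $F(+\infty;\Phi)>0$ by \eqref{Estimate-FInf}, the exponential factor is strictly positive, and $1+\delta\Phi(x)>0$ by the lower bound in \eqref{Estimate-h}. Hence $\Phi'(x)>0$ for every $x\geq 0$, which shows that $\Phi_{\delta\gamma}$ is increasing for all admissible $\delta,\gamma$, irrespective of their signs.

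For the concavity I would work directly from the differential equation \eqref{P}, rather than differentiating the integral a second time. Expanding the first term gives $\delta(\Phi')^2+(1+\delta\Phi)\Phi''+2x(1+\gamma\Phi)\Phi'=0$, whence
\begin{equation*}
\Phi''(x)=-\frac{\delta\bigl(\Phi'(x)\bigr)^2+2x\bigl(1+\gamma\Phi(x)\bigr)\Phi'(x)}{1+\delta\Phi(x)}.
\end{equation*}
The denominator is positive by \eqref{Estimate-h}. When $\delta\geq 0$ the first summand in the numerator is non-negative since $(\Phi')^2\geq 0$, and the second is non-negative because $x\geq 0$, $\Phi'>0$ (by the monotonicity just established), and $1+\gamma\Phi>0$ (again by \eqref{Estimate-h}, using $\gamma>-1$). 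Therefore the right-hand side is non-positive, giving $\Phi''\leq 0$ and hence concavity.

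The argument is essentially free of obstacles: the representation does all the work, and the required sign conditions are immediate from the uniform bounds \eqref{Estimate-h} together with the strict positivity of $\Phi'$. The only point worth flagging is the order of the two steps: the monotonicity must be proved \emph{before} the concavity, since it is precisely the strict positivity of $\Phi'$ that forces the second summand in the numerator of $\Phi''$ to carry the correct sign.
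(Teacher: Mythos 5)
Your proof is correct, and it follows exactly the route the paper delegates to Theorem 5.1 of \cite{CeSaTa2018}: differentiate the fixed-point representation $\Phi_{\delta\gamma}=T(\Phi_{\delta\gamma})$ to get strict positivity of $\Phi_{\delta\gamma}'$, then read off the sign of $\Phi_{\delta\gamma}''$ from the expanded ODE using $\delta\geq 0$, $x\geq 0$, and the bounds \eqref{Estimate-h}. Nothing is missing; this is a valid filling-in of the omitted argument.
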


\subsection*{Acknowledgments}
D. A. Tarzia and N. N. Salva have been supported by CONICET within the project PIP 0275, Universidad Austral, Rosario, Argentina and by the European Union's Horizon 2020 Research and Innovation Programme within the Marie Sklodowska-Curie grant agreement 823731 CONMECH. 
\bibliographystyle{unsrt} 
\bibliography{References}

\end{document}